\documentclass[reqno]{amsart}
\usepackage{amsmath,amsthm,amscd,amssymb,amsfonts, amsbsy}
\usepackage[usenames, dvipsnames]{color}
\usepackage{pxfonts}
\usepackage{bbm}
\usepackage{enumerate}
\usepackage{latexsym}
\usepackage{mathrsfs}
\usepackage{xparse}
\usepackage{marginnote}
\usepackage{todonotes}
\usepackage{tikz}

\numberwithin{equation}{section}

\usepackage{verbatim}


\theoremstyle{plain}
\newtheorem{theorem}{Theorem}[section]
\newtheorem{lemma}[theorem]{Lemma}

\theoremstyle{definition}
\newtheorem{definition}[theorem]{Definition}

\theoremstyle{remark}

\DeclareMathOperator{\tr}{tr}

\providecommand{\set}[1]{\{#1\}}

\providecommand{\abs}[1]{\lvert#1\rvert}
\providecommand{\Abs}[1]{\left\lvert#1\right\rvert}


\newcommand{\bR}{\mathbb{R}}

\renewcommand{\vec}[1]{\boldsymbol{#1}}

\begin{document}


\title[Two-sided Gaussian estimates for fundamental solutions]{Two-sided Gaussian estimates for fundamental solutions of second-order parabolic equations in non-divergence form}

\author[S. Kim]{Seick Kim}
\address[S. Kim]{Department of Mathematics, Yonsei University, 50 Yonsei-Ro, Seodaemun-gu, Seoul 03722, Republic of Korea.}
\email{kimseick@yonsei.ac.kr}
\thanks{Kim is supported by the National Research Foundation of Korea (NRF) under agreement NRF-2022R1A2C1003322.}

\author[S. Lee]{Sungjin Lee}
\address[S. Lee]{Aristotle University of Thessaloniki,
School of Mathematics, 541 24 Thessaloniki, Greece}
\email{slee@math.auth.gr }

\author[G. Sakellaris]{Georgios Sakellaris}
\address[G. Sakellaris]{Aristotle University of Thessaloniki,
School of Mathematics, 541 24 Thessaloniki, Greece}
\email{gsakell@math.auth.gr}
\thanks{G. Sakellaris acknowledges that this project is carried out within the framework of the National Recovery and Resilience Plan Greece 2.0, funded by the European Union, NextGenerationEU (Implementation body: HFRI, Project Name: CRITPDE, No. 14952).}

\keywords{Fundamental solution; Parabolic equation in non-divergence form; Dini mean oscillation}

\begin{abstract}
We establish two-sided Gaussian bounds for the fundamental solution of second-order parabolic operators in non-divergence form under minimal assumptions. Specifically, we show that the upper and lower bounds follow from the local boundedness property and the weak Harnack inequality for the adjoint operator $P^*$, respectively. This provides a simpler and more direct proof of the Gaussian estimates when the coefficients have Dini mean oscillation in  $x$, avoiding the use of normalized adjoint solutions required in previous works.
\end{abstract}

\maketitle

\section{Introduction and main results}

We consider the second-order parabolic operator $P$ in non-divergence form:
\[
Pu=\partial_t u -a^{ij}(t,x)D_{ij}u.
\]
The formal adjoint of $P$, denoted by $P^*$ is given by
\[
P^*v=-\partial_t v -D_{ij}(a^{ij}(t,x)v).
\]

We assume that the coefficient matrix $\mathbf{A}=(a^{ij})$ is symmetric and satisfies the uniform parabolicity condition:
\begin{equation}			\label{parabolicity}
\lambda \abs{\vec \xi}^2\leq a^{ij}(t,x)\xi_i\xi_j\leq \Lambda \abs{\vec \xi}^2,\quad\forall \vec \xi\in\bR^d,\quad \forall(t,x)\in \bR^{d+1},
\end{equation}
for some constants $0<\lambda \le \Lambda <\infty$.

A fundamental solution $\Gamma(t,x,s,y)$ for $P$ is a function satisfying
\[
P  \Gamma(\cdot, \cdot,s,y)=\delta_{{s,y}}(\cdot, \cdot)\;\text{ in }\;\bR^{d+1},
\]
or equivalently,
\[
P\Gamma(\cdot, \cdot, s,y) =0\;\text{ in }\; (s,\infty )\times \bR^d,\quad \lim_{t\searrow s}\Gamma(t,\cdot, s,y)=\delta_y(\cdot)\;\text{ on }\;\bR^d.
\]

By two-sided Gaussian estimates for $\Gamma(t,x,s,y)$, we mean the following bounds: for any $x$,$y\in \bR^d$ and $t$, $s\in \bR$ satisfying $0<t-s<T$,
\begin{equation} \label{eq:TG}
\frac{1}{N_1(t-s)^{d/2}} \exp \left\{-\kappa_1 \frac{\abs{x-y}^2}{t-s}\right\} \le \Gamma(t,x,s,y)\le \frac{N_1}{(t-s)^{d/2}} \exp \left\{-\frac{\abs{x-y}^2}{\kappa_1(t-s)}\right\},
\end{equation}
where $\kappa_1>0$ and $N_1>0$ are constants. The constant $N_1$ may depend on $T$.

It is well known that for parabolic operators in divergence form,
\[
\mathcal{P}u=\partial_t u - D_i(a^{ij}D_ju),
\]
the fundamental solution satisfies the two-sided Gaussian bounds \eqref{eq:TG} with $T=\infty$, where $\kappa_1$ and $N_1$ depend only on the dimension $d$ and the ellipticity constants $\lambda$ and $\Lambda$.
This was first established by Aronson \cite{Aronson}, based on Moser's Harnack inequality \cite{Moser}.
Later, Fabes and Stroock \cite{FS86} showed that Nash's ideas \cite{Nash} could be used to derive Aronson's bounds, thereby providing an alternative proof of Moser's parabolic Harnack inequality.

In contrast, for parabolic equations in non-divergence form, the fundamental solution does not generally satisfy Gaussian bounds unless additional regularity assumptions are imposed on the coefficients.
In fact, even if the coefficients are continuous in $t$ and $x$, the weaker upper bound
\[
\Gamma(t,x,s,y) \le \frac{C}{(t-s)^{d/2}}\quad \text{for }\,0<t-s<T,
\]
may still fail.
A counterexample is provided in \cite{Ilin62} for the one-dimensional equation $u_t -a(t,x) u_{xx}=0$, where the coefficient $a(t,x)$, though continuous and bounded by $1/2 \le a(t,x) \le 3/2$, yields a fundamental solution that does not satisfy that does not satisfy the above estimate.
See also \cite{FK81, Sa81} for further examples where the fundamental solution, as a measure, is singular with respect to Lebesgue measure for any $t > 0$, even under the continuity of coefficients.

When the coefficient matrix $\mathbf A$ is H\"older or Dini continuous, the two-sided bounds \eqref{eq:TG} do hold; see, for instance, \cite{Friedman, PE84, IO1962, LSU}.
In the case of merely continuous coefficients, Escauriaza \cite{Esc2000} established bounds for the fundamental solution involving a nonnegative function $W$, which is a parabolic Muckenhoupt weight belonging to the reverse H\"older class $\mathcal{B}_{(d+1)/d}$ and satisfies $P^*W=0$ in $\bR^{d+1}$.
This is a significant result, serving as a counterpart to Aronson’s classical estimates for divergence form equations. However, the function $W$ can be quite singular, and it is not clear under what conditions one can dispense with $W$ and recover the standard bounds \eqref{eq:TG}.
In \cite{DEK21}, it is shown that if the coefficient matrix $\mathbf A$ has Dini mean oscillation in the spatial variable $x$ (see Definition \ref{dmox}), then the weight $W$ is regular, and the Gaussian bounds \eqref{eq:TG} are valid.
In particular, the arguments in \cite{DEK21} implicitly show that the function $W$ satisfies the Harnack inequality for the formal adjoint operator $P^*$: if $u$ is a nonnegative solution of $P^*u=0$ in $(t_0, t_0+4r^2)\times B_{2r}(x_0)$, where $0<r<R_0$, then
\[
\sup_{(t_0+2r^2, t_0+3r^2)\times B_{r}(x_0)} u \le N \inf_{(t_0, t_0+r^2)\times B_{r}(x_0)} u,
\]
where $N>0$ is a constant depending only on $d$, $\lambda$, $\Lambda$, the Dini mean oscillation modulus of $\mathbf A$ in $x$, and $R_0$.
We emphasize that the time directions for the Harnack inequalities associated with $P$ and $P^*$ are opposite.

As noted earlier, in the divergence form case, the two-sided bounds \eqref{eq:TG} are intimately related to the Harnack inequality. According to the Krylov--Safonov theorem \cite{KS}, nonnegative solutions to $Pu=0$ satisfy the Harnack inequality.
However, in the non-divergence case, the formal adjoint operator $P^*$ does not necessarily inherit such properties from $P$.
In particular, the Harnack inequality may fail for $P^*$ unless additional regularity assumptions are imposed on $\mathbf A$.

This leads to a natural question: if the Harnack inequality holds for $P^*$, can one deduce the two-sided Gaussian bounds \eqref{eq:TG}?
We answer this question affirmatively.
Specifically, we show that if $P^*$ satisfies the local boundedness property, then the upper bound in \eqref{eq:TG} follows.
Moreover, if $P^*$ satisfies the weak Harnack inequality, then the lower bound in \eqref{eq:TG} holds. 

\begin{definition}[Local Boundedness Property]	\label{def_lb}
We say that $P^*$ satisfies the local boundedness property for solutions if, for any solution $u$ of $P^*u=0$ in 
\[
C_{2r}(X_0)=(t_0, t_0+4r^2)\times B_{2r}(x_0),
\]
where $X_0=(t_0,x_0) \in \bR^{d+1}$ and $0<r<R_0$, the estimate
\begin{equation}			\label{eq2134thu}
\sup_{C_r(X_0)} \abs{u} \le N_0 \fint_{C_{2r}(X_0)} \abs{u(t,x)}\,dx\,dt,
\end{equation}
holds.
Here, $N_0>0$ may depend on $R_0$, but it is uniform for all $r \in (0,R_0)$ and $X_0 \in \bR^{d+1}$.
In particular, $N_0$ is independent of $u$.
\end{definition}

\begin{definition}[Weak Harnack Inequality]	\label{def_wh}
We say that $P^*$ satisfies the weak Harnack inequality for solutions if, for any nonnegative solution $u$ of $P^*u=0$ in $C_{2r}(X_0)$, where $X_0=(t_0,x_0) \in \bR^{d+1}$ and $0<r<R_0$, the estimate
\begin{equation}			\label{eq2135thu}
N_0 \inf_{C_r(X_0)} u \ge \fint_{t_0+2r^2}^{t_0+3r^2}\!\!\! \fint_{B_r(x_0)} u(t,x)\,dx\,dt,
\end{equation}
holds, with the same uniformity assumptions on $N_0$ as above.
\end{definition}
 
The constants $N_0$ in the above definitions are not necessarily identical; however, for simplicity, we assume they are the same throughout this article.

\begin{theorem}		\label{thm1}
Suppose that the formal adjoint operator $P^*$ satisfies the local boundedness property.
Given $T>0$, let $N_0$ be the constant associated with $R_0=\sqrt{T}$ in Definition~\ref{def_lb}.
Then the fundamental solution $\Gamma(t,x,s,y)$ of $P$ satisfies the upper Gaussian bound: for any $x$,$y\in \bR^d$ and $t$, $s\in \bR$ such that $0<t-s<T$, we have
\[
\Gamma(t,x,s,y) \le \frac{N}{(t-s)^{d/2}} \exp\left\{-\kappa\frac{\abs{x-y}^2}{t-s}\right\},
\]
where $\kappa=\kappa(\Lambda)>0$ and $N=N(d,\lambda, \Lambda, N_0)>0$.
\end{theorem}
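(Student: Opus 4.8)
The plan is to combine three ingredients: (i) the representation of $\Gamma$ as the adjoint fundamental solution of $P^*$, i.e. $\Gamma(t,x,s,y) = \Gamma^*(s,y,t,x)$, so that $v = \Gamma^*(\cdot,\cdot,t,x)$ solves $P^* v = 0$ in $\{s < t\} \times \bR^d$ (away from the pole) and we may apply Definition~\ref{def_lb} to it; (ii) the conservation of mass / $L^1$ bound $\int_{\bR^d} \Gamma(t,x,s,y)\,dy \le 1$ (and likewise $\int_{\bR^d}\Gamma(t,x,s,y)\,dx \le 1$ in the adjoint variable), which holds under parabolicity alone by a standard maximum-principle argument; and (iii) the classical exponential off-diagonal decay trick via multiplication by an exponential weight $e^{\alpha\cdot(x-y)}$, which converts the on-diagonal bound into the full Gaussian bound.

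First I would establish the \emph{on-diagonal upper bound} $\Gamma(t,x,s,y) \le N(t-s)^{-d/2}$ for $0 < t-s < T$. Fix $(t,x)$ and set $r = \tfrac12\sqrt{t-s}$, $X_0 = (s, y)$ so that $C_{2r}(X_0) = (s, t)\times B_{2r}(y)$ does not see the pole at $(t,x)$ provided $|x-y|$ is not too small — and more carefully, one works with the function $u(\tau, z) = \Gamma(t,x,\tau,z)$ restricted to a cylinder ending before $\tau = t$; since $P^* u = 0$ there (in the backward time variable $\tau$, which matches the time direction in Definition~\ref{def_lb} after the reflection $\tau \mapsto -\tau$), the local boundedness property gives
\[
\Gamma(t,x,s,y) \le \sup_{C_r(X_0)} u \le N_0 \fint_{C_{2r}(X_0)} u(\tau,z)\,dz\,d\tau \le \frac{N_0}{|C_{2r}(X_0)|}\int_{s}^{t}\!\!\int_{\bR^d} \Gamma(t,x,\tau,z)\,dz\,d\tau.
\]
By the mass bound $\int_{\bR^d}\Gamma(t,x,\tau,z)\,dz \le 1$, the last integral is at most $t-s = 4r^2$, while $|C_{2r}(X_0)| = c_d r^{d+2}$, so the right-hand side is $\le N r^{-d} = N'(t-s)^{-d/2}$. (A small technical point: one must arrange the cylinder to sit strictly before time $t$ and apply a limiting/covering argument as the top of the cylinder approaches $t$; alternatively use a half-sized cylinder $(s, s+ 2r^2)\times B$ and iterate, or invoke the semigroup property $\Gamma(t,x,s,y) = \int \Gamma(t,x,\sigma,w)\Gamma(\sigma,w,s,y)\,dw$ to move the evaluation time into the interior. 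I expect this bookkeeping to be the main obstacle, though it is routine.)

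Next, to upgrade to the Gaussian bound, fix a vector $\vec\xi \in \bR^d$ and $\alpha \ge 0$, and consider $\Gamma_\alpha(t,x,s,y) := e^{\alpha \vec\xi\cdot(x-y)}\,\Gamma(t,x,s,y)$. A direct computation shows $\Gamma_\alpha$ is the fundamental solution of a conjugated operator $P_\alpha$ whose coefficients still satisfy \eqref{parabolicity} with the same $\lambda,\Lambda$, but with an added bounded drift and zeroth-order term of size $O(\alpha)$ and $O(\alpha^2)$ respectively; crucially, $P_\alpha^*$ inherits the local boundedness property (with a constant depending on $\alpha$ through $e^{C\alpha^2(t-s)}$) because the exponential weight is comparable to a constant on each cylinder of size $r \sim \sqrt{t-s}$. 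Thus by the on-diagonal bound applied to $\Gamma_\alpha$ — together with the modified mass bound $\int_{\bR^d}\Gamma_\alpha(t,x,\tau,z)\,dz \le e^{C\alpha^2(t-\tau)}$ obtained from the zeroth-order term via Gr\"onwall — we get $\Gamma_\alpha(t,x,s,y) \le N (t-s)^{-d/2} e^{C\alpha^2 (t-s)}$, i.e.
\[
\Gamma(t,x,s,y) \le \frac{N}{(t-s)^{d/2}}\,\exp\left\{-\alpha\,\vec\xi\cdot(x-y) + C\alpha^2(t-s)\right\}.
\]
Taking $\vec\xi = (x-y)/|x-y|$ and optimizing in $\alpha$ (choosing $\alpha = |x-y|/(2C(t-s))$) yields the claimed bound with $\kappa = 1/(4C)$, where $C = C(\Lambda)$, and $N = N(d,\lambda,\Lambda,N_0)$; one checks that the dependence of the on-diagonal constant on $N_0$ survives and that $\alpha$-dependence has been fully absorbed. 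This completes the proof.
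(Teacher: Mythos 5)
Your proposal is correct in outline but takes a genuinely different route to the Gaussian factor. The paper does not separate an on-diagonal bound from an off-diagonal improvement: it proves a single barrier lemma (Lemma~\ref{lemma02142fri}, from Escauriaza) showing that $u(t,x)=\int_{B_r(y)}\Gamma(t,x,\tau,z)\,dz$ is dominated by the explicit supersolution $h(t,x)=(1+(t-\tau)/r^2)^{-d\lambda/2\Lambda}\exp\{\frac{1}{4\Lambda}(1-|x-y|^2/(t-\tau+r^2))\}$ via the maximum principle for $P$; averaging this over $\tau\in[s,s+r^2]$ and dividing by $|B_r(y)|$ produces, in one step, a cylinder average of $\Gamma(t,x,\cdot,\cdot)$ that already carries both the $(t-s)^{-d/2}$ and the Gaussian factor, and a single application of Definition~\ref{def_lb} to the adjoint solution $(\tau,z)\mapsto\Gamma(t,x,\tau,z)$ finishes. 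Your version replaces this with Davies' exponential conjugation: the weighted mass bound $\int e^{\alpha\vec\xi\cdot(x-z)}\Gamma(t,x,\tau,z)\,dz\le e^{\Lambda\alpha^2(t-\tau)}$ is proved by exactly the same barrier mechanism (with barrier $e^{-\alpha\vec\xi\cdot x+\Lambda\alpha^2(t-\tau)}$), so the two approaches are morally equivalent; the paper's Gaussian barrier is just the optimized-in-$\alpha$ version of your family of exponential barriers, packaged so that no conjugated operator ever appears.

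One step of yours needs repair as stated: the claim that $P_\alpha^*$ inherits local boundedness ``because the exponential weight is comparable to a constant on each cylinder of size $r\sim\sqrt{t-s}$'' is false uniformly in $\alpha$ --- on $B_{2r}(y)$ the weight oscillates by a factor $e^{c\alpha\sqrt{t-s}}$, which is unbounded as $\alpha\to\infty$, and your optimization takes $\alpha\sim|x-y|/(t-s)$, so $\alpha\sqrt{t-s}$ is large precisely in the off-diagonal regime you care about. The conclusion survives because $e^{c\alpha\sqrt{t-s}}\le e^{c^2/4}e^{\alpha^2(t-s)}$ absorbs the oscillation into the $e^{C\alpha^2(t-s)}$ loss you already allow, but you should say this rather than assert comparability. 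Better still, you can skip the conjugation entirely: apply Definition~\ref{def_lb} once to the unweighted $\Gamma(t,x,\cdot,\cdot)$ and bound the resulting ball averages by $\int_{B_{2\rho}(y)}\Gamma(t,x,\tau,z)\,dz\le e^{-\alpha|x-y|+2\alpha\rho+\Lambda\alpha^2(t-\tau)}$ using the weighted mass bound with $\vec\xi=(x-y)/|x-y|$, then optimize in $\alpha$; this is essentially the paper's Lemma~\ref{lemma02142fri}. Your other flagged worry (the cylinder's top face touching the pole at $(t,x)$) is not an obstruction: $P^*u=0$ holds in the open cylinder, the sup in Definition~\ref{def_lb} is taken over the bottom quarter $C_r(X_0)$, away from the top, and the paper uses the cylinder $(s,t)\times B_{\sqrt{t-s}}(y)$ without further comment.
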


\begin{theorem}		\label{thm2}
Suppose that the formal adjoint operator $P^*$ satisfies  the weak Harnack inequality.
Given $T>0$, let $N_0$ be the constant associated with $R_0=\sqrt{T}$  in Definition~\ref{def_wh}.
Then the fundamental solution $\Gamma(t,x,s,y)$ of $P$ satisfies the lower Gaussian bound: for any $x$,$y\in \bR^d$ and $t$, $s\in \bR$ such that $0<t-s<T$, we have
\[
\Gamma(t,x,s,y) \ge \frac{1}{N(t-s)^{d/2}} \exp\left\{-\frac{\abs{x-y}^2}{\kappa(t-s)}\right\},
\]
where $\kappa=\kappa(d, \lambda, \Lambda)>0$ and $N=N(d,\lambda, \Lambda, N_0)>0$.
\end{theorem}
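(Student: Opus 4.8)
The plan is to run the classical Aronson–Fabes–Stroock scheme for the lower bound, but fed only by the weak Harnack inequality for $P^*$. I would use as standing facts about $\Gamma$ (from the preliminaries / standard theory): nonnegativity, joint continuity on $\{s<t\}$, the reproducing identity $\Gamma(t,x,s,y)=\int_{\bR^d}\Gamma(t,x,\tau,z)\,\Gamma(\tau,z,s,y)\,dz$ for $s<\tau<t$, mass conservation $\int_{\bR^d}\Gamma(t,x,s,y)\,dy=1$, the weak-$*$ convergence $\Gamma(t,x,\sigma,\cdot)\,dy\rightharpoonup\delta_x$ as $\sigma\nearrow t$, and — crucially — that for fixed $(t,x)$ the function $w(\sigma,y):=\Gamma(t,x,\sigma,y)$ is a nonnegative solution of $P^*w=0$ in $(-\infty,t)\times\bR^d$, so Definition~\ref{def_wh} applies to it on any cylinder contained there. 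The first real step is a \emph{mass concentration} estimate: testing $P^*w=0$ against a $C^{1,1}$ truncation of $y\mapsto\abs{x-y}^2$ and using $D_{ij}\abs{x-y}^2=2\delta_{ij}$ gives, formally, $\frac{d}{d\sigma}\int_{\bR^d}\abs{x-y}^2\,\Gamma(t,x,\sigma,y)\,dy=-2\int_{\bR^d}\tr\mathbf{A}(\sigma,y)\,\Gamma(t,x,\sigma,y)\,dy$, a quantity in $[-2d\Lambda,-2d\lambda]$ by parabolicity and mass conservation; integrating up to $\sigma=t$, where the second moment vanishes, yields $\int_{\bR^d}\abs{x-y}^2\,\Gamma(t,x,\sigma,y)\,dy\le 2d\Lambda(t-\sigma)$, hence by Chebyshev $\int_{B_{\rho_0\sqrt{t-\sigma}}(x)}\Gamma(t,x,\sigma,y)\,dy\ge\tfrac12$ for some $\rho_0=\rho_0(d,\Lambda)$. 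The truncation error is controlled by a short bootstrap: the same computation first yields a crude second-moment bound, which already gives tightness, and tightness then kills the error in the limit, returning the sharp constant.

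Next comes the \emph{near-diagonal} bound. For $0<t-s<T$ I would take $\sigma_2=\tfrac{s+t}2$, $r=\tfrac12\sqrt{t-\sigma_2}$, and cover $B_{\rho_0\sqrt{t-\sigma_2}}(x)$ by a number $K=K(d,\Lambda)$ of balls of radius $r$. For every time $\sigma$ in the short slab $I=(\sigma_2,\sigma_2+r^2)$ the mass-concentration estimate puts at least $\tfrac12$ of the mass of $\Gamma(t,x,\sigma,\cdot)$ into that finite union of balls, so a pigeonhole in both the ball index and $\sigma$ produces one ball $B_r(\zeta)$ with $\abs{\zeta-x}\lesssim\rho_0\sqrt{t-s}$ for which $\fint_I\fint_{B_r(\zeta)}\Gamma(t,x,\sigma,y)\,dy\,d\sigma\gtrsim(t-s)^{-d/2}$. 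Applying Definition~\ref{def_wh} to $w(\sigma,y)=\Gamma(t,x,\sigma,y)$ on the cylinder whose averaging slab is exactly $I$ — this cylinder lies in $\{\sigma<t\}$ precisely because $r\sim\sqrt{t-s}$ is comfortably smaller than $t-\sigma_2$ — upgrades this to a \emph{pointwise} bound $\Gamma(t,x,\sigma,y)\ge c'(d,\lambda,\Lambda,N_0)(t-s)^{-d/2}$ on a whole subcylinder located at time-distance $\sim(t-s)$ below $t$. A bounded number (depending only on $d,\Lambda$) of further weak-Harnack steps, propagating backward in time and sideways through overlapping balls in the usual chaining fashion, carries this lower bound down to time $s$ and over all $y$ with $\abs{x-y}\le 2\sqrt{t-s}$, giving $\Gamma(t,x,s,y)\ge c_0(d,\lambda,\Lambda,N_0)(t-s)^{-d/2}$ there (the terminal $\sigma\searrow s$ uses continuity). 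Every cylinder used has radius $\sim\sqrt{t-s}<\sqrt T=R_0$, so Definition~\ref{def_wh} always applies.

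For the \emph{off-diagonal} bound I would chain via the reproducing identity. Given $x,y$ with $L=\abs{x-y}$, set $n=\max\{1,\lceil L^2/(t-s)\rceil\}$, pick equally spaced times $t=\tau_0>\cdots>\tau_n=s$ of gap $h=(t-s)/n$ and collinear points $x=z_0,\dots,z_n=y$ with $\abs{z_j-z_{j+1}}=L/n\le\sqrt h$, and restrict each intermediate integration in the $(n-1)$-fold Chapman–Kolmogorov expansion to $B_{\sqrt h/2}(z_j)$. Then both arguments of every factor $\Gamma(\tau_j,\cdot,\tau_{j+1},\cdot)$ lie within $2\sqrt h=2\sqrt{\tau_j-\tau_{j+1}}$, so the near-diagonal bound estimates each factor below by $c_0 h^{-d/2}$; multiplying the $n$ factors by the $(n-1)$ ball volumes $\sim h^{d/2}$ telescopes to $\Gamma(t,x,s,y)\ge C(d)\,n^{d/2}(t-s)^{-d/2}\gamma^{\,n}$ with $\gamma\in(0,1)$, and since $n\le L^2/(t-s)+1$ this has the asserted Gaussian form $N^{-1}(t-s)^{-d/2}\exp\!\big(-L^2/(\kappa(t-s))\big)$ (tracking constants through the chain gives the stated dependence of $N$ and $\kappa$).

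The hard part will be the near-diagonal step, and within it the ``seed'': the mass of $\Gamma(t,x,\sigma,\cdot)$ is spread over the scale $\rho_0\sqrt{t-\sigma}$, which is strictly larger than the inner radius $r<\tfrac12\sqrt{t-\sigma}$ of any cylinder whose closure stays below time $t$, so no single application of the weak Harnack inequality can simultaneously capture the bulk of the mass and keep $w$ a solution on the relevant cylinder — the pigeonhole over a \emph{finite} cover of the concentration ball is exactly what reconciles the two scales. A secondary difficulty is making the integration by parts in the second-moment identity rigorous for merely measurable $\mathbf{A}$, which is the purpose of the crude-bound $\Rightarrow$ tightness $\Rightarrow$ sharp-bound bootstrap.
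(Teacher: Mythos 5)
Your proposal is correct in outline, but it follows a genuinely different route from the paper. The paper's proof uses the weak Harnack inequality for $P^*$ exactly once, at the very last step: it bounds $\Gamma(t,x,s,y)$ from below by the average of $\Gamma(t,x,\sigma,z)$ over the cylinder $(s+2r^2,s+3r^2)\times B_r(y)$ with $r=\sqrt{t-s}/2$. The entire Gaussian decay is produced beforehand by working in the \emph{forward} variables: the function $u^{(\sigma)}(\tau,\xi)=\int_{B_r(y)}\Gamma(\tau,\xi,\sigma,z)\,dz$ solves $Pu^{(\sigma)}=0$, is seeded by the Safonov--Yuan doubling lemma ($N_1\int_{B_{2\rho}(y)}\Gamma(\tau,\xi,\sigma,z)\,dz\ge 1$ near the pole), and is then propagated from $(\sigma+r^2/4,y)$ to $(t,x)$ by iterating the Krylov--Safonov Harnack inequality for $P$ along a chain of $k\approx |x-y|^2/(t-s)$ cylinders, yielding $u^{(\sigma)}(t,x)\ge N_2^{-k}$. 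You instead seed with a Nash-type mass-concentration estimate (conservation of mass plus a truncated second-moment identity), obtain the near-diagonal bound by pigeonholing over a finite cover and iterating the weak Harnack for $P^*$ itself, and produce the off-diagonal decay by Chapman--Kolmogorov chaining of near-diagonal bounds, i.e.\ Aronson's scheme. Each approach buys something: the paper's needs only one application of the hypothesis and avoids the semigroup identity, mass conservation, and any weak formulation of $P^*\Gamma(t,x,\cdot,\cdot)=0$, at the price of importing the Safonov--Yuan lemma and the Krylov--Safonov Harnack for $P$ (both valid for merely measurable coefficients); yours avoids those two external inputs but requires more standing structure on $\Gamma$ and a multi-step chaining argument using only the weak Harnack for $P^*$. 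On that last point, be aware that in Definition~\ref{def_wh} the infimum region $(t_0,t_0+r^2)\times B_r(x_0)$ and the averaging region share the same spatial ball, so sideways motion in your chain must proceed by arranging proportional overlap of the new averaging cylinder with the region where the lower bound is already known (using $u\ge0$), and consecutive infimum regions are separated by temporal gaps of order $r^2$; the radii and time offsets therefore have to be tuned so that $(s,y)$ actually lands in the final infimum cylinder. This is routine but should be written out; it is a detail, not a gap.
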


The proofs of Theorems \ref{thm1} and \ref{thm2} are given in Sections \ref{sec2} and \ref{sec3}, respectively.

It is straightforward to verify that the combination of the local boundedness property and the weak Harnack inequality implies the Harnack inequality, and vice versa.
This was, in fact, Moser's original approach in \cite{Moser}: he first established local boundedness and then proved the weak Harnack inequality.

Therefore, Theorems \ref{thm1} and \ref{thm2} together yield an alternative proof of the two-sided Gaussian bounds \eqref{eq:TG} under the assumption that $\mathbf A$ has Dini mean oscillation in $x$, as established in \cite{DEK21}.

\begin{definition}			\label{dmox}
Let $\mathbb C_r$ be the collection of all cylinders $C_r(X_0)$ in $\bR^{d+1}$.
For $C=C_r(X_0) \in \mathbb C_r$, we set
\[
\bar{\mathbf A}^{\textsf x}_{C}(t)=\fint_{B_r(x_0)} \mathbf A(t,x)\,dx,
\]
and for $r>0$, we define
\[
\omega_{\mathbf A}^{\textsf x}(r):=\sup_{C \in \mathbb{C}_r}\fint_{C} \,\Abs{\mathbf A(t,x)- \bar{\mathbf A}^{\textsf x}_{C}(t)}\,dxdt.
\]
We say that $\mathbf{A}$ has Dini mean oscillation in $x$ (denoted $\mathbf A \in {\rm DMO}_x$) if
\[
\int_0^{1} \frac{\omega_{\mathbf A}^{\textsf x}(t)}{t}\,dt <\infty.
\]
It is clear that if $\mathbf A$ is Dini continuous in $x$, then $\mathbf A \in {\rm DMO}_x$.
\end{definition}

\begin{theorem}[Dong--Escauriaza--Kim \cite{DEK21}]		\label{thm3}
Suppose that the coefficient matrix $\mathbf{A}$ has Dini mean oscillation in $x$.
Then the fundamental solution $\Gamma(t,x,s,y)$ of $P$ satisfies the two-sided Gaussian bounds \eqref{eq:TG}.
\end{theorem}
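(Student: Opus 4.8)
The plan is to deduce Theorem~\ref{thm3} from Theorems~\ref{thm1} and \ref{thm2} by verifying their hypotheses under the assumption $\mathbf A \in \mathrm{DMO}_x$. The crucial observation, already alluded to in the introduction, is that the operator $P^*$ --- read in the \emph{reverse} time direction --- behaves like a divergence-form operator with good coefficients. Indeed, writing $P^*v = -\partial_t v - D_{ij}(a^{ij}v)$, the DMO${}_x$ assumption says precisely that the spatially-averaged coefficients $\bar{\mathbf A}^{\mathsf x}_C(t)$ approximate $\mathbf A(t,x)$ with a Dini modulus, which is exactly the regularity needed to run a Campanato-type perturbation/freezing argument in the $x$ variables. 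So the first step is to cite (or quote from \cite{DEK21}) the interior regularity theory for adjoint solutions: if $P^*u = 0$ in $C_{2r}(X_0)$, then $u$ is locally bounded with the quantitative estimate \eqref{eq2134thu}, with $N_0$ depending only on $d,\lambda,\Lambda$, the modulus $\omega^{\mathsf x}_{\mathbf A}$, and the scale $R_0$. This establishes the hypothesis of Theorem~\ref{thm1}, yielding the upper Gaussian bound.

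The second step is to establish the weak Harnack inequality \eqref{eq2135thu} for nonnegative adjoint solutions under the same hypothesis. Here one again uses that, after freezing the spatial coefficients, $P^*$ becomes (in reversed time) a parabolic operator in divergence form with merely measurable-in-$t$ but frozen-in-$x$ coefficients, for which Moser's weak Harnack inequality applies; the DMO${}_x$ error is then absorbed as a lower-order perturbation via the Dini summability $\int_0^1 \omega^{\mathsf x}_{\mathbf A}(t)/t\,dt < \infty$, which controls the iteration of the perturbation terms over dyadic scales. Again this is essentially contained in \cite{DEK21} (where it was used to show the constructed weight $W$ is an $A_\infty$/reverse-Hölder weight satisfying the adjoint Harnack inequality), and the cleanest route is to extract it as a cited statement. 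With \eqref{eq2135thu} in hand, Theorem~\ref{thm2} gives the lower Gaussian bound.

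Combining the two bounds gives \eqref{eq:TG} with constants depending on $d$, $\lambda$, $\Lambda$, the DMO${}_x$ modulus of $\mathbf A$, and $T$ (through $R_0 = \sqrt T$), which is the assertion of Theorem~\ref{thm3}. One should also remark at the outset that the existence of a fundamental solution $\Gamma$ satisfying the stated defining properties is itself part of the theory --- under $\mathrm{DMO}_x$ coefficients this follows from the solvability results in \cite{DEK21}, so strictly speaking Theorem~\ref{thm3} is a statement about that particular $\Gamma$.

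The main obstacle is the second step: the weak Harnack inequality for $P^*$ is genuinely the deep ingredient, since $P^*$ does not inherit Krylov--Safonov-type estimates from $P$, and one really must exploit the near-divergence structure that DMO${}_x$ provides. The local boundedness of Step 1 is comparatively soft (it is an $L^2 \to L^\infty$ / Moser-iteration statement that tolerates rougher coefficients), but the weak Harnack inequality requires the full strength of the perturbation argument with the Dini-modulus bookkeeping. In a self-contained write-up one would need to carry out the freezing argument carefully, tracking how the oscillation terms enter the Caccioppoli and iteration estimates and verifying that the Dini condition makes the resulting series converge uniformly in the base point and scale; since this is precisely the technical heart of \cite{DEK21}, the honest and economical choice here is to invoke those results as a black box and present Theorem~\ref{thm3} as a corollary of Theorems~\ref{thm1}--\ref{thm2}.
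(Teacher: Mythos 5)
Your proposal is correct and takes essentially the same route as the paper: Theorem~\ref{thm3} is obtained as a corollary of Theorems~\ref{thm1} and \ref{thm2} by citing from the literature the adjoint Harnack inequality under $\mathrm{DMO}_x$ (equivalently, the local boundedness property plus the weak Harnack inequality for $P^*$) --- the paper invokes \cite[Theorem 2.20]{GK24}, inferred from \cite{DEK21}, rather than extracting the two halves from \cite{DEK21} directly. One small correction to your commentary: local boundedness for $P^*$ is \emph{not} a soft Moser-iteration fact tolerating rough coefficients --- for double-divergence-form operators adjoint solutions can be singular measures even with continuous coefficients (see \cite{FK81, Sa81}) --- so this step, too, genuinely requires the $\mathrm{DMO}_x$ hypothesis, which your proof does in fact invoke.
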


The proof of Theorem \ref{thm3} follows directly from the Harnack inequality for the adjoint operator $P^*$ (see \cite[Theorem 2.20]{GK24}), combined with Theorems \ref{thm1} and \ref{thm2}.
We remark that although \cite[Theorem 2.20]{GK24} is not explicitly stated in \cite{DEK21}, it can be inferred from the results therein.
In fact, Theorem \ref{thm3} appears as a remark in \cite[Remark 5.12]{DEK21}, where the results of Escauriaza \cite{Esc2000} play a central role.
Our approach, by contrast, is more direct, as it avoids the use of normalized adjoint solutions, which are essential in \cite{Esc2000}.

We also note that in \cite{DKL22}, the upper Gaussian estimate was established under the stronger assumption that the coefficient matrix $\mathbf{A}$ is Dini continuous in $x$.
However, the method used there is applicable to parabolic systems in non-divergence form, as it does not rely on the Harnack inequality.

\section{Proof of Theorem \ref{thm1}} \label{sec2}
Let $X=(t,x)$, $Y=(s,y)$ be points in $\bR^{d+1}$ with $0<t-s<T$, where $T$ is a fixed number but arbitrary.
Define
\[
r=\sqrt{t-s}/2
\]
and consider the parabolic cylinder
\[
C_{2r}(Y)=(s,s+4r^2)\times B_{2r}(y)=(s,t)\times B_{\sqrt{t-s}}(y).
\]

\begin{center}
\begin{tikzpicture}[scale=1]
\draw[->] (-3.5, -4) -- (3.5, -4); 
\draw[->] (0, -4) -- (0, 0.5);
\draw[thick, gray] (-2,-4) rectangle (2,0);
\draw[thick, red] (-1,-2) rectangle (1,-1);
\draw[thick, blue] (-1,-4) rectangle (1,-3);
\filldraw (0,-4) circle (1.5pt) node[below right] {$(s,y)$};
\filldraw (0,0) circle (1.5pt) node[below right] {$(t,y)$};
\filldraw (3,0) circle (1.5pt) node[below right] {$(t,x)$};
\draw[dotted] (-1,-4) -- (-1,0);
\draw[dotted] (1,-4) -- (1,0);
\draw[dotted] (3,-4) -- (3,0);
\draw[dotted] (-2.5, 0) -- (3, 0);
\node[left] at (-2.5, 0) {$s+4r^2$};
\draw[dotted] (-2.5, -1) -- (2, -1);
\node[left] at (-2.5, -1) {$s +3r^2$};
\draw[dotted] (-2.5, -2) -- (2, -2);
\node[left] at (-2.5, -2) {$s + 2r^2$};
\draw[dotted] (-2.5, -3) -- (2, -3);
\node[left] at (-2.5, -3) {$s + r^2$};
\draw[dashed] (-2, -3.8) -- (2.5, -3.8);
\node[right] at (2.5, -3.8) {$\tau$};
\end{tikzpicture}
\end{center}

For each $\tau \in [s, s+r^2]$, consider the following integral on $\{\tau\} \times B_r(y)$.
\begin{equation}			\label{eq0714wed}
\int_{B_r(y)} \Gamma(t,x,\tau,z)\,dz.
\end{equation}

The following lemma, stated in \cite[Lemma 3.1]{Esc2000}, provides a Gaussian decay estimate for the integral in \eqref{eq0714wed}.

\begin{lemma}		\label{lemma02142fri}
Let $x$, $y\in \bR^d$ and $\tau$, $t\in \bR$ with $\tau<t$.
Then, for any $r>0$, we have
\begin{equation}		\label{eq02131thr}
\int_{B_r(y)} \Gamma(t,x,\tau,z)\,dz \le \left( 1+ \frac{t-\tau}{r^2} \right)^{-d\lambda/2\Lambda}\exp\left\{ \frac{1}{4\Lambda} \left(1-\frac{\abs{x-y}^2}{t-\tau+r^2} \right)\right\}.
\end{equation}
\end{lemma}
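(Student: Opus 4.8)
The plan is to dominate the left-hand side of \eqref{eq02131thr} by the value at $(t,x)$ of an explicit nonnegative supersolution of the forward operator $P$. The mechanism is the elementary fact that if $v\ge0$ is a bounded supersolution of $Pv\ge0$ on $[\tau,t]\times\bR^d$, then $\int_{\bR^d}\Gamma(t,x,\tau,z)\,v(\tau,z)\,dz\le v(t,x)$. This follows by combining the reproducing identity $\Gamma(t,x,\tau,z)=\int_{\bR^d}\Gamma(t,x,\sigma,w)\Gamma(\sigma,w,\tau,z)\,dw$ with the parabolic comparison principle for the Cauchy problem $Pu=0$ on $\bR^d$ in the class of bounded functions (the supersolution dominates the solution sharing its initial datum): together these show that $\sigma\mapsto\int_{\bR^d}\Gamma(t,x,\sigma,w)\,v(\sigma,w)\,dw$ is non-decreasing on $[\tau,t)$, and letting $\sigma\nearrow t$, where $\Gamma(t,x,\sigma,\cdot)\rightharpoonup\delta_x$, identifies its limit as $v(t,x)$. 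Equivalently, one may test the weak adjoint equation satisfied by $\Gamma$ in the variables $(\tau,z)$ against $v$ and integrate by parts.

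It then remains to exhibit an admissible $v$. Fix $(t,x)$ and $\tau<t$, and for $\sigma\in[\tau,t]$ set $\theta(\sigma):=\sigma-\tau+r^2>0$. I would take
\[
\psi(\sigma,z):=\left(1+\frac{\sigma-\tau}{r^2}\right)^{-\alpha}\exp\left\{\beta\left(1-\frac{\abs{z-y}^2}{\theta(\sigma)}\right)\right\},\qquad\alpha:=\frac{d\lambda}{2\Lambda},\quad\beta:=\frac{1}{4\Lambda}.
\]
This function is smooth, strictly positive, and bounded by $e^\beta$ on $[\tau,t]\times\bR^d$; at the initial time $\psi(\tau,z)=\exp\{\beta(1-\abs{z-y}^2/r^2)\}\ge1$ for $z\in B_r(y)$, so $\psi(\tau,\cdot)\ge\mathbf 1_{B_r(y)}$ pointwise on $\bR^d$, while $\psi(t,x)$ is exactly the right-hand side of \eqref{eq02131thr}.

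The remaining step is to check that $\psi$ is a supersolution, $P\psi=\partial_\sigma\psi-a^{ij}D_{ij}\psi\ge0$ on $(\tau,t)\times\bR^d$. Writing $w:=z-y$ and $\rho:=\abs{w}^2$, a direct computation gives
\[
\frac{\theta(\sigma)^2\,P\psi}{\psi}=-\alpha\,\theta(\sigma)+\beta\rho-4\beta^2\,a^{ij}(\sigma,z)\,w_iw_j+2\beta\,\theta(\sigma)\,\tr\mathbf A(\sigma,z),
\]
and since $a^{ij}w_iw_j\le\Lambda\rho$ and $\tr\mathbf A\ge d\lambda$ by \eqref{parabolicity}, the right-hand side is bounded below by $\theta(\sigma)\,(2\beta d\lambda-\alpha)+\beta\rho\,(1-4\beta\Lambda)$, which is identically zero for the chosen $\alpha,\beta$. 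Applying the fact recalled at the outset with $v=\psi$, and using $\psi(\tau,\cdot)\ge\mathbf 1_{B_r(y)}$ together with $\Gamma\ge0$, gives $\int_{B_r(y)}\Gamma(t,x,\tau,z)\,dz\le\psi(t,x)$, which is \eqref{eq02131thr}.

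The pointwise computation above is routine; the only delicate point is the elementary fact invoked at the start, which must be justified at the regularity in which $\Gamma$ is constructed. A clean way is to prove \eqref{eq02131thr} first for smooth coefficients — where $\Gamma$ is a classical fundamental solution and the reproducing identity, the comparison principle, and the limit $\sigma\nearrow t$ are all standard — and then pass to the limit, using that $\alpha$ and $\beta$ depend only on $d,\lambda,\Lambda$ and not on the smoothness of $\mathbf A$; the integration-by-parts alternative works equally well, since $\psi$ is smooth, bounded, and Gaussian-decaying in $z$.
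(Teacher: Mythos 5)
Your proof is correct and follows essentially the same route as the paper: the same barrier $h=\psi$ with exponents $\alpha=d\lambda/2\Lambda$, $\beta=1/4\Lambda$, the same verification that $P\psi\ge 0$ via \eqref{parabolicity}, and a comparison at the initial time $\{\sigma=\tau\}$. The only cosmetic difference is that the paper applies the maximum principle directly to $u(t,x)=\int_{B_r(y)}\Gamma(t,x,\tau,z)\,dz$, which solves $Pu=0$ with initial datum $\mathbbm{1}_{B_r(y)}\le\psi(\tau,\cdot)$, rather than routing the comparison through the duality/reproducing identity you describe.
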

\begin{proof}
Consider the function
\[
u(t,x):=\int_{B_r(y)} \Gamma(t,x,\tau,z)\,dz.
\]
This function satisfies the equation
\[
Pu=0 \;\text{ in }\; (\tau,\infty) \times \bR^d,\quad u=\mathbbm{1}_{B_r(y)}\;\text{ on }\;\set{t=\tau}\times \bR^d.
\]
Now, define the auxiliary function
\[
h(t,x):=\left( 1+ \frac{t-\tau}{r^2} \right)^{-d\lambda/2\Lambda} \exp\left\{ \frac{1}{4\Lambda} \left(1-\frac{\abs{x-y}^2}{t-\tau+r^2}\right)\right\}.
\]
A direct computation gives
\[
Ph=\left\{ \frac{1}{4\Lambda}\frac{\abs{x-y}^2}{(t-\tau+r^2)^2}-\frac{d\lambda}{2\Lambda(t-\tau+r^2)} -\frac{ a^{ij}(x_i-y_i)(x_j-y_j)}{4\Lambda^2(t-\tau+r^2)^2} +\frac{\tr \mathbf A}{2\Lambda(t-\tau+r^2)}\right\}h \ge 0,
\]
where we used the parabolicity condition \eqref{parabolicity}.
Since $h\ge u$ on $\set{t=\tau}\times \bR^d$, the desired inequality \eqref{eq02131thr} follows from the maximum principle for $P$.
\end{proof}

Applying Lemma \ref{lemma02142fri} with $r=\sqrt{t-s}/2$, we obtain
\begin{equation}	\label{eq0754wed}
\int_{B_{\sqrt{t-s}/2}(y)}\Gamma(t,x,\tau,z)\,dz \le \frac{e^{1/4\Lambda}}{4^{d\lambda/2\Lambda}} \, \exp\left\{-\frac{\abs{x-y}^2}{5\Lambda(t-s)}\right\}\quad \text{for all }\; \tau \in [s, (3s+t)/4].
\end{equation}
Taking average on both sides of \eqref{eq0754wed} over $\tau \in [s, (3s+t)/4]$ and dividing by the volume of $B_{\sqrt{t-s}/2}(y)$, we obtain

\begin{equation*}	
\fint_{s}^{\frac{3s+t}{4}} \!\!\!\fint_{B_{\sqrt{t-s}/2}(y)}\Gamma(t,x,\tau,z) \,dz d\tau\le \frac{2^d e^{1/4\Lambda}}{4^{d\lambda/2\Lambda}\abs{B_1}} \,\frac{1}{(t-s)^{d/2}} \exp\left\{-\frac{\abs{x-y}^2}{5\Lambda(t-s)}\right\}.
\end{equation*}
This, combined with the local boundedness property for $P^*$ in Definition \ref{def_lb}, provides an upper bound for the fundamental solution
\[
\Gamma(t,x,s,y) \le \frac{N_0 2^d e^{1/4\Lambda}}{4^{d\lambda/2\Lambda}\abs{B_1}} \,\frac{1}{(t-s)^{d/2}} \exp\left\{-\frac{\abs{x-y}^2}{5\Lambda(t-s)}\right\},
\]
where $N_0$ may depend on $R_0=\sqrt{T}$.
The theorem is proved.
\qed

\section{Proof of Theorem \ref{thm2}}		\label{sec3}
Let $X=(t,x)$, $Y=(s,y)$ be points in $\bR^{d+1}$ with $0<t-s<T$, where $T$ is a fixed number but arbitrary.
Define
\[
r=\sqrt{t-s}/2
\]
and consider the parabolic cylinder
\[
C_{2r}(Y)=(s,s+4r^2)\times B_{2r}(y)=(s,t)\times B_{\sqrt{t-s}}(y).
\]

\begin{center}
\begin{tikzpicture}[scale=1]
\draw[->] (-3.5, -4) -- (3.5, -4);
\draw[->] (0, -4) -- (0, 0.5);
\draw[thick, gray] (-2,-4) rectangle (2,0);
\draw[thick, red] (-1,-2) rectangle (1,-1);
\draw[thick, blue] (-1,-4) rectangle (1,-3);
\filldraw (0,-4) circle (1.5pt) node[below right] {$(s,y)$};
\filldraw (0,0) circle (1.5pt) node[below right] {$(t,y)$};
\filldraw (3,0) circle (1.5pt) node[below right] {$(t,x)$};
\draw[dotted] (-1,-4) -- (-1,0);
\draw[dotted] (1,-4) -- (1,0);
\draw[dotted] (3,-4) -- (3,0);
\draw[dotted] (-2.5, 0) -- (3, 0);
\node[left] at (-2.5, 0) {$s+4r^2$};
\draw[dotted] (-2.5, -1) -- (2, -1);
\node[left] at (-2.5, -1) {$s +3r^2$};
\draw[dotted] (-2.5, -2) -- (2, -2);
\node[left] at (-2.5, -2) {$s + 2r^2$};
\draw[dashed] (-2, -1.8) -- (2.5, -1.8);
\node[right] at (2.5, -1.8) {$\sigma$};
\draw[dotted] (-2.5, -3) -- (2, -3);
\node[left] at (-2.5, -3) {$s + r^2$};
\end{tikzpicture}
\end{center}

For $\sigma \in [s+2r^2, s+3r^2]$, consider the function
\begin{equation*}			
u^{(\sigma)}(\tau,\xi)=\int_{B_r(y)} \Gamma(\tau,\xi,\sigma,z)\,dz.
\end{equation*}

We recall the following lemma by Safonov and Yuan \cite[Lemma 4.1]{SY99}.
\begin{lemma}[Safonov--Yuan \cite{SY99}]		\label{thm_sy}
There exists a constant $N_1=N_1(d,\lambda, \Lambda)>0$ such that
\[
N_1 \int_{B_{2\rho}(y)} \Gamma(\tau,\xi,\sigma, z)\,dz \ge 1
\]
for all $(\sigma, y) \in \bR^{d+1}$, $\rho>0$, and $(\tau,\xi) \in [\sigma,\sigma+\rho^2]\times B_\rho(y)$.
\end{lemma}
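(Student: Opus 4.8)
The plan is to combine a parabolic rescaling with a barrier/comparison argument on a fixed bounded cylinder. First I would invoke the scaling $(t,x)\mapsto(\sigma+\rho^{2}t,\,y+\rho x)$: the function $\widetilde\Gamma(t,x,s,z):=\rho^{d}\Gamma(\sigma+\rho^{2}t,\,y+\rho x,\,\sigma+\rho^{2}s,\,y+\rho z)$ is the fundamental solution of $\partial_{t}-a^{ij}(\sigma+\rho^{2}t,\,y+\rho x)D_{ij}$, an operator with the \emph{same} ellipticity constants $\lambda,\Lambda$, and $\int_{B_{2}}\widetilde\Gamma(t,x,0,z)\,dz=\int_{B_{2\rho}(y)}\Gamma(\sigma+\rho^{2}t,\,y+\rho x,\,\sigma,\,z)\,dz$. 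Hence it suffices to prove the normalized statement: there is $c=c(d,\lambda,\Lambda)>0$ with
\[
u(\tau,\xi):=\int_{B_{2}}\Gamma(\tau,\xi,0,z)\,dz\ \ge\ c\qquad\text{for all }(\tau,\xi)\in[0,1]\times B_{1},
\]
and then $N_{1}=1/c$. Here $Pu=0$ in $(0,\infty)\times\bR^{d}$, $u(0,\cdot)=\mathbbm{1}_{B_{2}}$, and comparison with the constant $1$ gives $0\le u\le1$; in particular $u\ge0$ on the lateral boundary $(0,1]\times\partial B_{2}$ and $u=1$ on $\{0\}\times B_{2}$.

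Next I would construct an explicit \emph{universal} subsolution of $P$ (one that works for every admissible $\mathbf A$). Fix a smooth nonincreasing profile $g\colon[0,2]\to[0,1]$ with $g(0)=1$, $g(2)=0$ and $g(1)>0$ — for concreteness $g(r)=\cos^{2}(\pi r/4)$ — and set $\phi(\tau,\xi)=e^{-\mu\tau}g(\abs\xi)$ on $[0,1]\times\overline{B_{2}}$. Since $a^{ij}D_{ij}\bigl(g(\abs\xi)\bigr)\ge\mathcal M^{-}\!\bigl(D^{2}g(\abs\xi)\bigr)$ for every $\mathbf A$ satisfying \eqref{parabolicity}, where $\mathcal M^{-}$ is the Pucci minimal operator with constants $\lambda,\Lambda$, one has $P\phi\le0$ in $(0,1]\times B_{2}$ for all admissible $\mathbf A$ as soon as
\[
-\mu\,g(r)\ \le\ \mathcal M^{-}\!\bigl(D^{2}g(\abs\xi)\bigr)\qquad\text{for }0<r=\abs\xi<2.
\]
The Hessian of $\xi\mapsto g(\abs\xi)$ has the radial eigenvalue $g''(r)$ and the tangential eigenvalue $g'(r)/r$ of multiplicity $d-1$, so this is a one-variable inequality: on the concave part it reads $\mu g\ge-\Lambda\bigl(g''+\tfrac{d-1}{r}g'\bigr)$, on the convex ``foot'' it reads $\mu g\ge\Lambda(d-1)\tfrac{\abs{g'}}{r}-\lambda g''$, and near $r=0$ it reads $\mu\ge d\Lambda\,\abs{g''(0)}$. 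Each of these holds once $\mu=\mu(d,\lambda,\Lambda)$ is large enough; for $g=\cos^{2}(\pi r/4)$ a concrete choice such as $\mu=\tfrac{(d-1)^{2}\Lambda^{2}}{\lambda}+d\Lambda\pi^{2}$ works. One also records $\phi(0,\cdot)=g(\abs\xi)\le1$ on $B_{2}$, $\phi\le0$ on $(0,1]\times\partial B_{2}$, and $\phi\ge e^{-\mu}g(1)$ on $[0,1]\times\overline{B_{1}}$.

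The conclusion is then immediate from the maximum principle on the \emph{bounded} cylinder $Q=(0,1]\times B_{2}$: $u$ is a supersolution and $\phi$ a subsolution of $P$ on $Q$, while $\phi\le u$ on the parabolic boundary $\partial_{p}Q$ by the two previous paragraphs. Hence $u\ge\phi$ on $Q$, so $u(\tau,\xi)\ge e^{-\mu}g(1)=:c>0$ for $(\tau,\xi)\in[0,1]\times\overline{B_{1}}$, and undoing the rescaling proves the lemma with $N_{1}=1/c=e^{\mu}/g(1)$.

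The step I expect to be the main obstacle is the construction in the second paragraph: exhibiting a \emph{single} profile $g$ and constant $\mu$ that make $\phi$ a subsolution simultaneously for all admissible $\mathbf A$. This is what forces the passage to the worst-case Pucci operator, and the delicate region is the foot $1<\abs\xi<2$, where $D^{2}g(\abs\xi)$ is indefinite, so $\mathcal M^{-}$ rewards the positive radial curvature only at rate $\lambda$ while charging the negative tangential curvature at the full rate $\Lambda$; near the origin, where $D^{2}g(0)=g''(0)I$ is a negative multiple of the identity, the worst case $\mathcal M^{-}=d\Lambda\,g''(0)$ is what dictates $\mu\gtrsim d\Lambda$. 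The remaining points — existence, nonnegativity and continuity of $\Gamma$ for merely bounded measurable coefficients, the estimate $\int_{\bR^{d}}\Gamma(\tau,\xi,0,z)\,dz\le1$, and the applicability of the parabolic comparison principle to the Lipschitz barrier $\phi$ (by mild smoothing of $g$, or via its viscosity subsolution property) — are routine.
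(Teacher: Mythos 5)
The paper does not actually prove this lemma; it is recalled verbatim from Safonov--Yuan \cite[Lemma 4.1]{SY99} and used as a black box. So there is no in-paper argument to compare against, and your proposal is a genuine self-contained proof of the cited ingredient.

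Your barrier argument is correct.  The parabolic rescaling is set up properly (the operator keeps its ellipticity constants and the integral over $B_{2\rho}(y)$ becomes an integral over $B_2$), and the reduction to a lower bound for $u(\tau,\xi)=\int_{B_2}\Gamma(\tau,\xi,0,z)\,dz$ on $[0,1]\times B_1$ is exactly the right normalization. The key computation is also sound: writing $D^2\bigl(g(\abs\xi)\bigr)$ in terms of its radial eigenvalue $g''(r)$ and tangential eigenvalue $g'(r)/r$ and bounding $a^{ij}D_{ij}$ below by the Pucci minimal operator does make $\phi=e^{-\mu\tau}g(\abs\xi)$ a subsolution for every admissible $\mathbf A$, once $\mu$ dominates (at $r=0$) $d\Lambda\abs{g''(0)}=d\Lambda\pi^2/8$, (on $0<r\le 1$) $\Lambda(\abs{g''}+(d-1)\abs{g'}/r)/g$, and (on $1<r<2$) the quantity $\bigl(\Lambda(d-1)\abs{g'}/r-\lambda g''\bigr)/g$; for $g(r)=\cos^2(\pi r/4)$ the last ratio is controlled because $g''>0$ on that interval, so the AM--GM step goes through and a choice of $\mu\sim(d-1)^2\Lambda^2/\lambda+d\Lambda\pi^2$ indeed suffices.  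The boundary comparison $\phi\le u$ on $\partial_p\bigl((0,1]\times B_2\bigr)$ is immediate ($\phi\equiv 0$ on the lateral boundary and $\phi\le 1=u$ on the bottom), so $u\ge e^{-\mu}g(1)$ on $[0,1]\times B_1$. The technical issues you flag at the end (regularity of $\Gamma$ for bounded measurable coefficients, attainment of the initial trace, and the form of the comparison principle that pairs a smooth sub-barrier with a strong/Krylov--Safonov supersolution) are real but standard and are handled as you indicate; you might also note explicitly that $g(\abs\xi)=\tfrac12\bigl(1+\cos(\pi\abs\xi/2)\bigr)$ is smooth at the origin because $\cos$ is an even entire function, so no smoothing of the barrier is actually needed.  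In short: same statement, different source; the paper defers to \cite{SY99}, while you give a clean direct barrier proof that buys independence from that reference.
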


In particular, Theorem \ref{thm_sy} implies
\begin{equation}				\label{eq1412mon}
u^{(\sigma)}(\sigma+r^2/4, y) \ge 1/N_1.
\end{equation}

The following lemma establishes exponential lower bounds and it is based on an iterative application of the Krylov--Safonov Harnack inequality.

\begin{lemma}				\label{lem1106wed}
For $\sigma \in [s+2r^2, s+3r^2]$, we have
\begin{equation}			\label{eq1103wed}
u^{(\sigma)}(t,x) \ge \frac{1}{N_2} \exp\left\{-\kappa_0\,\frac{\abs{x-y}^2}{t-s} \right\},
\end{equation}
where $N_2=N_2(d,\lambda, \Lambda)>1$ and $\kappa_0=\kappa_0(d,\lambda, \Lambda)>0$.
\end{lemma}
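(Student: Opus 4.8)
The plan is to establish the Gaussian lower bound for $u^{(\sigma)}$ by chaining together Krylov--Safonov Harnack inequalities along a sequence of parabolic cylinders forming a ``chain'' from the point $(\sigma + r^2/4, y)$ — where we already have the quantitative lower bound \eqref{eq1412mon} — up to the point $(t,x)$. Since $u^{(\sigma)}$ is a nonnegative solution of $Pu = 0$ in $(\sigma, \infty) \times \bR^d$ (as a superposition of fundamental solutions), the Krylov--Safonov Harnack inequality \cite{KS} applies to it on any parabolic cylinder contained in that half-space.

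\medskip

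First I would fix $\sigma \in [s+2r^2, s+3r^2]$ and note that $t - \sigma \geq r^2/4$ (in fact $t-\sigma$ is comparable to $r^2 = (t-s)/4$ from both sides, say $r^2 \leq t - \sigma \leq 2r^2$). Set a base radius $\rho \sim r$ and connect $(\sigma + r^2/4, y)$ to $(t, x)$ by a sequence of points $X_0, X_1, \dots, X_n$ where each $X_{k+1}$ lies in the ``top'' sub-cylinder in which $X_k$ sits at the ``bottom,'' so that one Harnack step gives $u^{(\sigma)}(X_{k+1}) \geq c\, u^{(\sigma)}(X_k)$ with $c = 1/N$ depending only on $d, \lambda, \Lambda$. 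The number of steps needed is governed by two quantities: the time elapsed, $t - \sigma \lesssim r^2$, which costs $O(1)$ steps since each step advances time by $\sim r^2$; and the spatial displacement $|x - y|$, which requires roughly $|x-y|/\rho \sim |x-y|/r$ steps to traverse horizontally (one must move in space while also moving forward in time, which is permitted). Hence $n \lesssim 1 + |x-y|/r$, and iterating,
\[
u^{(\sigma)}(t,x) \geq c^{\,n}\, u^{(\sigma)}(\sigma + r^2/4, y) \geq \frac{1}{N_1}\, c^{\,n} \geq \frac{1}{N_1}\, \exp\left\{-C\left(1 + \frac{|x-y|}{r}\right)\right\}.
\]

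\medskip

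The final step is to convert the linear-in-$|x-y|/r$ exponential rate into the quadratic-in-$|x-y|^2/(t-s)$ form claimed in \eqref{eq1103wed}. Using $r = \sqrt{t-s}/2$, the exponent above is $-C(1 + 2|x-y|/\sqrt{t-s})$. When $|x-y| \leq \sqrt{t-s}$ this is bounded below by an absolute constant, giving $u^{(\sigma)}(t,x) \geq 1/N_2$, which dominates the right-hand side of \eqref{eq1103wed}. When $|x-y| > \sqrt{t-s}$, one instead performs the chaining more carefully using larger spatial steps: take $\rho$ of size $\sim |x-y|/n$ for an optimized $n$, or more simply observe that with a chain of $n$ cylinders each of radius $\rho = |x-y|/n$ one needs $n \gtrsim |x-y|^2 / (t - \sigma) \sim |x-y|^2/(t-s)$ steps for the time intervals to cover $[\sigma + r^2/4, t]$ while the spatial cylinders march from $y$ to $x$; then $u^{(\sigma)}(t,x) \geq c^n / N_1 \geq N_2^{-1} \exp\{-\kappa_0 |x-y|^2/(t-s)\}$, as desired.

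\medskip

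I expect the \textbf{main obstacle} to be the bookkeeping in the chaining argument: one must verify that at each step the relevant parabolic cylinder is contained in the region $(\sigma, \infty) \times \bR^d$ where $u^{(\sigma)}$ solves the equation (this is why the lower endpoint is taken at $\sigma + r^2/4$ rather than at $\sigma$, leaving room), and that the geometry of the chain — advancing forward in time while translating in space — is compatible with the specific form of the Krylov--Safonov inequality (which relates the supremum on an upper cylinder to the infimum on a lower, time-shifted cylinder). Getting the dependence of the number of steps correct in both regimes $|x-y| \lesssim \sqrt{t-s}$ and $|x-y| \gg \sqrt{t-s}$, and checking that the constants $N_2, \kappa_0$ depend only on $d, \lambda, \Lambda$ and not on $\sigma, s, t$ or the particular radius $r$, is the technical heart of the proof; the Harnack inequality itself is invoked as a black box.
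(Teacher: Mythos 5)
Your proposal is correct and takes essentially the same route as the paper: starting from the Safonov--Yuan bound $u^{(\sigma)}(\sigma+r^2/4,y)\ge 1/N_1$, chain the Krylov--Safonov Harnack inequality to $(t,x)$ using $n\sim 1+\abs{x-y}^2/(t-s)$ cylinders of radius $\rho\sim(t-t_0)/\abs{x-y}$, with a separate short argument (two Harnack steps) when $\abs{x-y}\lesssim\sqrt{t-s}$, which is exactly the paper's Case 1/Case 2 split with $k=\lceil 4\abs{x-y}^2/(t-\sigma-r^2/4)\rceil$. Note only that your first-pass count $n\lesssim 1+\abs{x-y}/r$ with steps of radius $\rho\sim r$ is geometrically impossible (each such step consumes time $\sim r^2$, and the total time available is only $\sim r^2$, so the chain could not traverse the required spatial distance); you correctly abandon this in the final paragraph in favor of the quadratic step count, which is the version that works.
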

\begin{proof}
Define
\begin{equation}			\label{eq1359wed}
k=\left\lceil 4\abs{x-y}^2/(t-\sigma-r^2/4)\right\rceil.
\end{equation}
We consider two cases:

\medskip
\noindent
\textbf{Case 1:} $k \ge 3$.
Define
\[
x_0=y,\quad t_0 = \sigma+r^2/4,
\]
and for $j=1,\ldots,k$, set
\[
x_j=y+j(x-y)/k, \quad t_j=t_0+j(t-t_0)/k.
\]
Note that
\[
t_j-t_{j-1}=(t-t_0)/k,\qquad \abs{x_j-x_{j-1}} =\abs{x-y}/k.
\]
Consider the cylinder
\[
C^{(j)}=B_{2\varepsilon}(y_j)\times (t_{j-1}-\varepsilon^2, t_j),\quad\text{where }\; y_j:=\tfrac{1}{2}(x_{j-1}+x_j)\;\text{ and }\;\varepsilon:=\tfrac{1}{2} \sqrt{(t-t_0)/k}.
\]

\begin{center}
\begin{tikzpicture}[scale=1]
\draw[->] (-3, -3.75) -- (3, -3.75);
\draw[->] (0, -3.75) -- (0, 0.375);
\draw[thick, gray] (-2,-3.75) rectangle (2,0);
\draw[thick, blue] (-1,-0.75) rectangle (1,0);
\draw[thick, red] (-1,-3) rectangle (1,-2.25);
\filldraw (0,-3.75) circle (1.5pt) node[below right] {$(t_{j-1}-\varepsilon^2,y_j)$};
\filldraw (0.5,0) circle (1.5pt) node[above right] {$(t_j,x_j)$};
\filldraw (-0.5,-3) circle (1.5pt) node[below] {$(t_{j-1},x_{j-1})\qquad$};
\draw[dotted] (-2,-3.75) -- (-2,0);
\draw[dotted] (2,-3) -- (2,0);
\draw[dotted] (-1,-3) -- (-1,0);
\draw[dotted] (1,-3) -- (1,0);
\draw[dotted] (-2.5, 0) -- (2, 0);
\node[left] at (-2.5, 0) {$t_j$};
\draw[dotted] (-2.5, -0.75) -- (2, -0.75);
\node[left] at (-2.5, -0.75) {$t_j-\varepsilon^2$};
\draw[dotted] (-2, -1.5) -- (2, -1.5);
\draw[dotted] (-2.5, -2.25) -- (2, -2.25);
\node[left] at (-2.5, -2.25) {$t_{j-1}+\varepsilon^2$};
\draw[dotted] (-2.5, -3) -- (2, -3);
\node[left] at (-2.5, -3) {$t_{j-1}$};
\end{tikzpicture}
\end{center}

Since $k$ is defined by the formula \eqref{eq1359wed}, we have
\[
\abs{x_j-x_{j-1}} \le \tfrac{1}{2}\sqrt{t_j-t_{j-1}}=\varepsilon,
\]
which in turn implies
\[
\abs{x_j-y_j}=\abs{x_{j-1}-y_j}=\tfrac{1}{2}\abs{x_j-x_{j-1}} \le \tfrac{1}{2} \varepsilon<\varepsilon.
\]
Moreover, the assumption $k\ge 3$ ensures that $C^{(j)} \subset (\sigma,\infty)\times \bR^d$.
Since $u^{(\sigma)} \ge 0$ and satisfies
\begin{equation}			\label{eq1824wed}
P u^{(\sigma)}=0 \quad\text{in}\quad(\sigma,\infty) \times \bR^d,
\end{equation}
the Krylov-Safonov Harnack inequality gives
\[
N_2 \inf_{B_\varepsilon(y_j)\times (t_j-\varepsilon^2,t_j)} u^{(\sigma)} \ge  \sup_{B_\varepsilon(y_j)\times (t_{j-1},t_{j-1}+\varepsilon^2)} u^{(\sigma)},
\]
where $N_2=N_2(d,\lambda, \Lambda)>1$.
In particular, this yields
\begin{equation}			\label{eq1429wed}
u^{(\sigma)}(t_j, x_j) \ge u^{(\sigma)}(t_{j-1}, x_{j-1})/N_2.
\end{equation}
Applying \eqref{eq1429wed} iteratively for $j=1,\ldots, k$ and using \eqref{eq1359wed}, we obtain
\[
u^{(\sigma)}(t, x) \ge N_2^{-k} u^{(\sigma)}(t_0, y) \ge N_2^{-1}\exp\left\{-4\log N_2 \frac{\abs{x-y}^2}{t-t_0} \right\} u^{(\sigma)}(t_0, y).
\]
This, together with \eqref{eq1412mon}, yields \eqref{eq1103wed} since $t_0=\sigma+r^2/4 \in [s+9r^2/4, s+13r^2/4]$.

\medskip
\noindent
\textbf{Case 2:} $k \le 2$.
In this case, \eqref{eq1359wed} implies
\begin{equation}		\label{eq1832wed}
\abs{x-y}^2 \le (t-\sigma-r^2/4)/2 \le  7(t-s)/16.
\end{equation}
Since $u^{(\sigma)}\ge 0$ and satisfies \eqref{eq1824wed}, the Krylov--Safonov Harnack inequality implies that for any $y_0 \in \bR^d$, $s_0\ge \sigma$, and $\rho>0$,
\[
N \inf_{(s_0+3\rho^2, s_0+4\rho^2)\times B_\rho(y_0)} u^{(\sigma)} \ge \sup_{(s_0+\rho^2, s_0+2\rho^2)\times B_\rho(y_0)} u^{(\sigma)}.
\]
Applying the Harnack inequality twice, while considering \eqref{eq1832wed}, we obtain
\[
N^2 u^{(\sigma)}(t,x) \ge u^{(\sigma)}(\sigma+r^2/4,y).
\]
Together with \eqref{eq1412mon}, this yields \eqref{eq1103wed}.
\end{proof}

Next, applying the weak Harnack inequality for $P^*$ in Definition \ref{def_wh}, we obtain
\begin{align*}
\Gamma(t,x,s,y) &\ge \frac{1}{N_0} \frac{1}{r^d \abs{B_1}} \fint_{s+2r^2}^{s+3r^2} \!\!\!\int_{B_r(y)} \Gamma(t,x,\sigma, z) \,dz d\sigma\\
&= \frac{4^d}{N_0(t-s)^{d/2} \abs{B_1}} \fint_{s+2r^2}^{s+3r^2} u^{(\sigma)}(t,x)\, d\sigma,
\end{align*}
where $N_0>0$ may depend on $R_0=\sqrt{T}$.
Applying Lemma \ref{lem1106wed}, we obtain the lower bound
\[
\Gamma(t,x,s,y)\ge \frac{4^d}{N_0N_2\abs{B_1}} \frac{1}{(t-s)^{d/2}}\exp\left\{-\kappa_0\,\frac{\abs{x-y}^2}{t-s} \right\}.
\]
The theorem is proved.\qed




\begin{thebibliography}{99}

\bibitem{Aronson} Aronson, D. G.
\textit{Bounds for the fundamental solution of a parabolic equation}.
Bull. Amer. Math. Soc. \textbf{73} (1967), 890--896.

\bibitem{DEK21}
Dong, Hongjie; Escauriaza, Luis; Kim, Seick.
\textit{On $C^{1/2,1}$, $C^{1,2}$, and $C^{0,0}$ estimates for linear parabolic operators}.
J. Evol. Equ. \textbf{21} (2021), no. 4, 4641--4702.

\bibitem{DKL22}
Dong, Hongjie; Kim, Seick; Lee, Sungjin.
\textit{Estimates for fundamental solutions of parabolic equations in non-divergence form}.
J. Differential Equations \textbf{340} (2022), 557--591.

\bibitem{Esc2000}
Escauriaza, Luis.
\textit{Bounds for the fundamental solution of elliptic and parabolic equations in nondivergence form}.
Comm. Partial Differential Equations \textbf{25} (2000), no. 5-6, 821--845.

\bibitem{FK81}
Fabes, Eugene B.; Kenig, Carlos E.
\textit{Examples of singular parabolic measures and singular transition probability densities}. Duke Math. J. \textbf{48} (1981), no. 4, 845--856.

\bibitem{FS86}
Fabes, E. B.; Stroock, D. W.
\textit{A new proof of Moser's parabolic Harnack inequality using the old ideas of Nash}.
Arch. Rational Mech. Anal. \textbf{96} (1986), no. 4, 327--338.

\bibitem{Friedman}
Friedman, Avner.
\textit{Partial differential equations of parabolic type}.
Prentice-Hall, Inc., Englewood Cliffs, N.J. 1964.

\bibitem{GK24}
Gy\"ongy, Istvan; Kim, Seick.
\textit{Harnack inequality for parabolic equations in double-divergence form with singular lower order coefficients}.
J. Differential Equations \textbf{412} (2024), 857--880.

\bibitem{Ilin62}
Il'in, A. M.
\textit{On the fundamental solution of a parabolic equation}. (Russian) Dokl. Akad. Nauk SSSR \textbf{147} 1962 768--771.

\bibitem{IO1962}
Il'in, A. M.; Kala\v{s}nikov, A. S.; Oleinik, O. A.
\textit{Second-order linear equations of parabolic type}. (Russian) Uspehi Mat. Nauk \textbf{17} 1962 no. 3 (105), 3--146.


\bibitem{KS}
Krylov, N. V.; Safonov, M. V.
\textit{A property of the solutions of parabolic equations with measurable coefficients}.(Russian)
Izv. Akad. Nauk SSSR Ser. Mat. \textbf{44} (1980), no. 1, 161--175.
Math. USSR Izvestija, \textbf{16} (1981), 151--164 (English).

\bibitem{LSU}
Lady\v{z}enskaja, O. A.; Solonnikov, V. A.; Ural'ceva, N. N.
\textit{Linear and quasilinear equations of parabolic type}. (Russian)
Translated from the Russian by S. Smith. Translations of Mathematical Monographs, Vol. 23 American Mathematical Society, Providence, R.I. 1968.

\bibitem{Moser} Moser, J.
\textit{A Harnack inequality for parabolic differential equations}.
Comm. Pure Appl. Math. \textbf{17} (1964), 101--134.

\bibitem{Nash} Nash, J.
\textit{Continuity of solutions of parabolic and elliptic equations}.
Amer. J. Math. \textbf{80} (1958), 931--954.

\bibitem{PE84}
Porper, F. O.; Eidel'man, S. D.
\textit{Two-sided estimates of the fundamental solutions of second-order parabolic equations and some applications of them}. (Russian) Uspekhi Mat. Nauk \textbf{39} (1984), no. 3(237), 107--156.
English translation: Russian Math. Surveys \textbf{39} (1984), no. 3, 119--179.

\bibitem{Sa81}
Safonov, M. V.
\textit{An example of diffusion process with singular distribution at some given time}. In: Abstr. Comm. Third Vilnius Conference on Probability Theory and Math. Statistics, June 22–27, Vilnius: Institut matematiki i kibernetiki AN Litovskoi SSR, (1981), 133--134 (in Russian).


\bibitem{SY99}
Safonov, Mikhail V.; Yuan, Yu.
\textit{Doubling properties for second order parabolic equations}.
Ann. of Math. (2) \textbf{150} (1999), no. 1, 313--327.


\end{thebibliography}
\end{document}